\newtheorem{lemma}{Lemma}
\theoremstyle{definition}
\newtheorem{definition}{Definition}
\begin{document}
%
\title{A non-convex approach to low-rank and sparse matrix decomposition}
%
%
%

\author{Angang~Cui,
        Meng~Wen,
        Haiyang~Li
        and~Jigen Peng
\thanks{A. Cui is with the School of Mathematics and Statistics, Xi'an Jiaotong University, Xi'an, 710049, China. e-mail: (cuiangang@163.com).}
\thanks{M. Wen is with the School of Science, Xi'an Polytechnic University, Xi'an, 710048, China. e-mail: (wen5495688@163.com).}
\thanks{H. Li and J. Peng are with the School of Mathematics and Information Science, Guangzhou University, Guangzhou, 510006, China. e-mail: (fplihaiyang@126.com; jgpengxjtu@126.com).}
\thanks{This work was supported by the National Natural Science Foundations of China (11771347, 91730306, 41390454, 11271297) and the Science Foundations of Shaanxi Province of
China (2016JQ1029, 2015JM1012).}
\thanks{Manuscript received, ; revised , .}}

%
%

\markboth{Journal of \LaTeX\ Class Files,~Vol.~, No.~, ~}%
{Shell \MakeLowercase{\textit{et al.}}: Bare Demo of IEEEtran.cls for IEEE Journals}
%



\maketitle

\begin{abstract}
In this paper, we develop a nonconvex approach to the problem of low-rank and sparse matrix decomposition. In our nonconvex method, we replace the rank function and
the $l_{0}$-norm of a given matrix with a non-convex fraction function on the singular values and the elements of the matrix respectively. An alternative direction
method of multipliers algorithm is utilized to solve our proposed nonconvex problem with the nonconvex fraction function penalty. Numerical experiments on some low-rank
and sparse matrix decomposition problems show that our method performs very well in recovering low-rank matrices which are heavily corrupted by large sparse errors.
\end{abstract}

\begin{IEEEkeywords}
Low-rank and sparse matrix decomposition, Nonconvex fraction function, Alternative direction method of multipliers algorithm.
\end{IEEEkeywords}

%
\IEEEpeerreviewmaketitle

\section{Introduction} \label{section1}

In many scientific and engineering applications, such as separating foregrounds and backgrounds from videos \cite{Wright2009,Liu2017}, shadows and specularities removing in
face images \cite{Candes2011}, Latent semantic indexing \cite{Candes2011, Min2010}, image processing \cite{Zhang2011,Wu2018}, the observed data matrix $M\in \mathbb{R}^{m\times n}$
can naturally be decomposed into a low-rank matrix $L\in \mathbb{R}^{m\times n}$ and a corrupted sparse matrix $S\in \mathbb{R}^{m\times n}$ with the arbitrarily large elements.
That is, the observed data matrix $M\in \mathbb{R}^{m\times n}$ can be decomposed as
\begin{equation}\label{equ1}
M=L+S,
\end{equation}
where $L\in \mathbb{R}^{m\times n}$ is the low-rank matrix, and $S\in \mathbb{R}^{m\times n}$ is the sparse error matrix with the arbitrarily large elements. Without loss of generality,
we assume $m\geq n$ in throughout this paper. In mathematics, decomposing the observed data matrix $M\in \mathbb{R}^{m\times n}$ in to a sum of the low-rank matrix $L\in \mathbb{R}^{m\times n}$
and sparse matrix $S\in \mathbb{R}^{m\times n}$ can be described as the following minimization problem \cite{Peng2013}:
\begin{equation}\label{equ2}
\min_{L,S\in \mathbb{R}^{m\times n}}\ \mathrm{rank}(L)+\lambda \|S\|_{0},\ \ s.t. \ \ M=L+S,
\end{equation}
where $\lambda>0$ is a tuning parameter, $\mathrm{rank}(L)$ represents the rank of matrix $L$, and $\|S\|_{0}$ is the $l_{0}$-norm of the matrix $S$, which counts
the number of nonzero elements of the matrix $S$. In general, problem (\ref{equ2}) is a challenging nonconvex optimization problem \cite{Peng2013} because of the
discrete natures of the rank function $\mathrm{rank}(L)$ and $l_{0}$-norm $\|S\|_{0}$.
Inspired by the good performance of the nonconvex fraction function in our latest work \cite{Li2019} in compressed sensing , in this paper, we replace the rank
function $\mathrm{rank}(L)$ and the $l_{0}$-norm $\|S\|_{0}$ in the nonconvex problem (\ref{equ2}) with the continuous promoting low rank non-convex function
\begin{equation}\label{equ3}
F_{a_{1}}(\sigma(L))=\sum_{i\in[n]}\rho_{a_{1}}(\sigma_{i}(L))=\sum_{i\in[n]}\frac{a_{1}\sigma_{i}(L)}{a_{1}\sigma_{i}(L)+1}
\end{equation}
and the continuous promoting sparse non-convex function
\begin{equation}\label{equ4}
F_{a_{2}}(S)=\sum_{l\in[m],j\in[n]}\rho_{a_{2}}(S_{l,j})=\sum_{l\in[m],j\in[n]}\frac{a_{2}|S_{l,j}|}{a_{2}|S_{l,j}|+1}
\end{equation}
respectively, where the parameters $a_{1}, a_{2}\in(0,+\infty)$, $\sigma(L)$ is the vector of singular values of matrix $L$ arranged in descending order
and $\sigma_{i}(L)$ is the $i$-th largest element of $\sigma(L)$, $S_{l,j}$ is the element in the $i$-th row and $j$-th column of matrix $S$, and the
non-convex fraction function is defined as
\begin{equation}\label{equ5}
\rho_{a}(t)=\frac{a|t|}{a|t|+1}
\end{equation}
for all $a\in(0,+\infty)$ and $t\in \mathbb{R}$. With the change of parameter $a>0$, we have
\begin{equation}\label{equ6}
\lim_{a\rightarrow+\infty}\rho_{a}(t)=\lim_{a\rightarrow+\infty}\frac{a|t|}{a|t|+1}=\left\{
    \begin{array}{ll}
      0, & {\ \ \mathrm{if} \ t=0;} \\
      1, & {\ \ \mathrm{if} \ t\neq 0.}
    \end{array}
  \right.
\end{equation}
Then, the function $F_{a_{1}}(\sigma(L))$ interpolates the rank of matrix $L$:
\begin{equation}\label{equ7}
\lim_{a_{1}\rightarrow+\infty}F_{a_{1}}(\sigma(L))=\lim_{a_{1}\rightarrow+\infty}\sum_{i\in[n]}\rho_{a_{1}}(\sigma_{i}(L))=\mathrm{rank}(L)
\end{equation}
and the function $F_{a_{2}}(S)$ interpolates the $l_{0}$-norm of matrix $S$:
\begin{equation}\label{equ8}
\lim_{a_{2}\rightarrow+\infty}F_{a_{2}}(S)=\lim_{a_{2}\rightarrow+\infty}\sum_{l\in[m],j\in[n]}\rho_{a_{2}}(S_{l,j})=\|S\|_{0}.
\end{equation}
By above transformation, we can state the following minimization problem
\begin{equation}\label{equ9}
\min_{L,S\in \mathbb{R}^{m\times n}}\ F_{a_{1}}(\sigma(L))+\lambda F_{a_{2}}(S),\ \ s.t. \ \ M=L+S
\end{equation}
as the approximation for the nonconvex problem (\ref{equ2}).

One of the advantages for problem (\ref{equ9}) is that it will be flexible compared with the nonconvex problem (\ref{equ2}) with the change of the parameters $a_{1}, a_{2}\in(0,+\infty)$.
Unfortunately, the non-convex constrained problem (\ref{equ9}) is still computationally harder to solve due to the nonconvexity of the nonconvex fraction function. Usually, we
consider its augmented Lagrange version leading to an optimization problem that can be easily solved. The augmented Lagrange version for the problem (\ref{equ9}) can be described as the
following minimization problem:
\begin{equation}\label{equ10}
\displaystyle\min_{L,S\in \mathbb{R}^{m\times n}}\ F_{a_{1}}(\sigma(L))+\lambda F_{a_{2}}(S)+\langle Y, M-L-S\rangle+\displaystyle \frac{\mu}{2}\|M-L-S\|_{F}^{2},
\end{equation}
where $Y\in \mathbb{R}^{m\times n}$ is the Lagrange multiplier matrix and $\mu>0$ is the penalty parameter .

The rest of this paper is organized as follows. In Section \ref{section2}, we summarize some preliminary results that will be used in this paper. In Section \ref{section3}, we use an
alternative direction method of multipliers algorithm to solve the the augmented Lagrange problem (\ref{equ10}). In Section \ref{section4}, we present some numerical experiments
on some low-rank and sparse matrix decomposition problems to demonstrate the performances of our method. Finally, some conclusion remarks are presented in Section \ref{section5}.

\section{Preliminaries} \label{section2}
In this section, we summarize some crucial lemmas and definitions that will be used in this paper.

\begin{lemma}\label{lem1}
Define a function of $\beta\in \mathbb{R}$ as
\begin{equation}\label{equ11}
f_{a, \tau}(\beta)=\frac{1}{2}(\beta-\gamma)^{2}+\tau\rho_{a}(\beta)
\end{equation}
where $\gamma\in \mathbb{R}$ and $\tau>0$, the proximal operator $h_{f_{a, \tau}}(\gamma):=\arg\min_{\beta\in \mathbb{R}}f_{a, \tau}(\beta)$ can be described as
\begin{equation}\label{equ12}
h_{f_{a, \tau}}(\gamma)=\left\{
    \begin{array}{ll}
      g_{a,\tau}(\gamma), & \ \ \mathrm{if} \ {|\gamma|> t_{a,\tau};} \\
      0, & \ \ \mathrm{if} \ {|\gamma|\leq t_{a,\tau}.}
    \end{array}
  \right.
\end{equation}
where $g_{a,\tau}(\gamma)$ is defined as
\begin{equation}\label{equ13}
g_{a,\tau}(\gamma)=\mathrm{sign}(\gamma)\bigg(\frac{\frac{1+a|\gamma|}{3}(1+2\cos(\frac{\phi(\gamma)}{3}-\frac{\pi}{3}))-1}{a}\bigg),
\end{equation}
$$\phi(\gamma)=\arccos\Big(\frac{27\tau a^{2}}{2(1+a|\gamma|)^{3}}-1\Big),$$
and the threshold value $t_{a,\tau}$ satisfies
\begin{equation}\label{equ14}
t_{a,\tau}=\left\{
    \begin{array}{ll}
      \tau a, & \ \ \mathrm{if} \ {\tau\leq \frac{1}{2a^{2}};} \\
      \sqrt{2\tau}-\frac{1}{2a}, & \ \ \mathrm{if} \ {\tau>\frac{1}{2a^{2}}.}
    \end{array}
  \right.
\end{equation}
\end{lemma}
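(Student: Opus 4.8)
The plan is to reduce to the scalar problem on the half-line and then solve a cubic. By the symmetry $h_{f_{a,\tau}}(-\gamma)=-h_{f_{a,\tau}}(\gamma)$, which follows from $\rho_{a}(-\beta)=\rho_{a}(\beta)$, it suffices to treat $\gamma\ge 0$, and for such $\gamma$ a minimiser is nonnegative, since replacing $\beta<0$ by $-\beta\ge 0$ leaves $\rho_{a}$ unchanged and replaces $(\beta-\gamma)^{2}$ by the smaller $(\beta+\gamma)^{2}$. On $[0,\infty)$ we have $\rho_{a}(\beta)=a\beta/(a\beta+1)$, so $f'_{a,\tau}(\beta)=(\beta-\gamma)+\tau a/(a\beta+1)^{2}$ and interior stationary points solve $(\beta-\gamma)(a\beta+1)^{2}+\tau a=0$; substituting $w=a\beta+1\ge 1$ and writing $p=1+a|\gamma|$ turns this into $w^{3}-pw^{2}+\tau a^{2}=0$.

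Next I would analyse $c(w):=w^{3}-pw^{2}+\tau a^{2}$ by elementary calculus: $c(0)=\tau a^{2}>0$, $c$ has a local maximum at $w=0$ and a local minimum at $w=2p/3$ of value $\tau a^{2}-4p^{3}/27$, so $c$ has two positive roots $w_{1}\le 2p/3\le w_{2}$ exactly when $\tau a^{2}\le 4p^{3}/27$, which is precisely the condition under which $\phi(\gamma)$ is well defined. Using $f'_{a,\tau}(\beta)=\big(\tau a-(\gamma-\beta)(a\beta+1)^{2}\big)/(a\beta+1)^{2}$ and discussing the monotonicity of $h(\beta):=(\gamma-\beta)(a\beta+1)^{2}$ (splitting on whether $\gamma\le 1/(2a)$), I would conclude that the only interior local minimiser of $f_{a,\tau}$ on $[0,\infty)$ is $\beta^{*}=(w_{2}-1)/a$, where $w_{2}$ is the largest root of $c$, the behaviour at the endpoint $\beta=0$ being read off from $f'_{a,\tau}(0)=\tau a-\gamma$. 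Solving $w^{3}-pw^{2}+\tau a^{2}=0$ by the trigonometric formula for the roots in the case of three real roots yields $w=\frac{p}{3}\big(1+2\cos(\frac{\psi}{3})\big)$ with $\psi=\arccos\!\big(1-\frac{27\tau a^{2}}{2p^{3}}\big)$; since $\arccos(-x)=\pi-\arccos(x)$ we have $\psi=\pi-\phi(\gamma)$, and $\cos(\frac{\pi}{3}-\frac{\phi(\gamma)}{3})=\cos(\frac{\phi(\gamma)}{3}-\frac{\pi}{3})$, so returning to $\beta=(w-1)/a$ and restoring the sign reproduces exactly $g_{a,\tau}(\gamma)$ in (\ref{equ13}).

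The main obstacle is establishing the threshold (\ref{equ14}): one must decide when the interior well beats the origin, i.e.\ when $f_{a,\tau}(\beta^{*})<f_{a,\tau}(0)=\frac{1}{2}\gamma^{2}$. I would impose simultaneously $f'_{a,\tau}(\beta^{*})=0$ and $f_{a,\tau}(\beta^{*})=f_{a,\tau}(0)$; dividing the equal-value relation by $\beta^{*}>0$ gives $\gamma=\frac{1}{2}\beta^{*}+\tau a/(a\beta^{*}+1)$, stationarity gives $\gamma=\beta^{*}+\tau a/(a\beta^{*}+1)^{2}$, and subtracting the two forces $(a\beta^{*}+1)^{2}=2\tau a^{2}$, hence $\beta^{*}=\sqrt{2\tau}-1/a$ and the critical value of $\gamma$ is $\sqrt{2\tau}-1/(2a)$. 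This is admissible only when $\beta^{*}>0$, that is $\tau>1/(2a^{2})$, which is exactly why (\ref{equ14}) splits into two cases; when $\tau\le 1/(2a^{2})$ no double well can occur and the transition happens as soon as the single interior critical point detaches from $0$, namely when $f'_{a,\tau}(0)=\tau a-\gamma=0$, giving $t_{a,\tau}=\tau a$. I would finish by checking that $\frac{1}{2}\gamma^{2}-f_{a,\tau}(\beta^{*}(\gamma))$ is nondecreasing in $\gamma\ge 0$ (via the envelope identity $\frac{d}{d\gamma}f_{a,\tau}(\beta^{*}(\gamma))=\gamma-\beta^{*}(\gamma)$), so that $|\gamma|>t_{a,\tau}$ is precisely the region where the nonzero minimiser wins, noting that at $|\gamma|=t_{a,\tau}$ with $\tau>1/(2a^{2})$ both $0$ and $\beta^{*}$ are minimisers and the stated operator selects $0$.
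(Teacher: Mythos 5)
Your proposal is correct, and it is worth noting that the paper itself does not actually prove Lemma \ref{lem1}: it only points to Cardano-type root formulas (the ``Cartan'' citation) and to Lemma 10 of the reference by Li et al., so your write-up supplies the complete argument that the paper merely gestures at. Your route is essentially the one those references take, and all the key computations check out: the reduction by oddness of $h_{f_{a,\tau}}$ to $\gamma\ge 0$ with a nonnegative minimiser; the stationarity equation $(\beta-\gamma)(a\beta+1)^{2}+\tau a=0$, which under $w=a\beta+1$, $p=1+a|\gamma|$ becomes $w^{3}-pw^{2}+\tau a^{2}=0$; the observation that real roots beyond the local minimum of the cubic exist exactly when $\tau a^{2}\le 4p^{3}/27$, i.e.\ when the $\arccos$ defining $\phi(\gamma)$ is admissible; the identification of the interior local minimiser with the largest root $w_{2}$ (since the sign of $f_{a,\tau}'$ equals the sign of the cubic); and the identity $\psi=\pi-\phi(\gamma)$ converting the standard trigonometric root $\frac{p}{3}(1+2\cos\frac{\psi}{3})$ into exactly the expression (\ref{equ13}). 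The threshold derivation is also sound: imposing $f_{a,\tau}(\beta^{*})=f_{a,\tau}(0)$ together with $f_{a,\tau}'(\beta^{*})=0$ forces $(a\beta^{*}+1)^{2}=2\tau a^{2}$ and hence the critical value $\sqrt{2\tau}-\frac{1}{2a}$, admissible precisely when $\tau>\frac{1}{2a^{2}}$; and in the complementary case $\tau\le\frac{1}{2a^{2}}$, $|\gamma|\le\tau a$ gives $p\le 3/2$, so the cubic is increasing on $w\ge 1$ with $c(1)=a(\tau a-|\gamma|)\ge 0$, hence $f_{a,\tau}$ is increasing on $[0,\infty)$ and $0$ wins, while $|\gamma|>\tau a$ makes $c(1)<0$ and the largest root the global minimiser --- which is exactly your ``single critical point detaches from $0$'' picture. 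The envelope argument $\frac{d}{d\gamma}f_{a,\tau}(\beta^{*}(\gamma))=\gamma-\beta^{*}(\gamma)$ correctly shows the gap $\frac{1}{2}\gamma^{2}-f_{a,\tau}(\beta^{*}(\gamma))$ is increasing, so the crossover happens only once, at $t_{a,\tau}$. Two small points you already flag, and which deserve to be kept: at $|\gamma|=t_{a,\tau}$ with $\tau>\frac{1}{2a^{2}}$ the minimiser is not unique and (\ref{equ12}) is a selection (it picks $0$); and the monotonicity/envelope step implicitly uses that the largest root $w_{2}$ depends continuously (and, off the double-root configuration, smoothly) on $\gamma$, which holds since it is a simple root in the regime $|\gamma|>t_{a,\tau}$. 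Compared with the paper's pointer to a hyperbolic-function form of the root formula, your use of the cosine (three-real-roots) form is the appropriate one, since the relevant regime is exactly the one with three real roots.
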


\begin{proof}
The proof of Lemma \ref{lem1} using the Cartan's root-finding formula expressed in terms of hyperbolic functions (see \cite{xing2003}). It is also similar to the proof of Lemma 10 in \cite{Li2019},
so it is omitted here.
\end{proof}

\begin{definition}\label{de1}
(Vector thresholding operator) For any $a>0$, $\tau>0$ and $x\in \mathbb{R}^{n}$, we define the vector thresholding operator $H_{f_{a,\tau}}$ on vector $x$ as
\begin{equation}\label{equ15}
\mathcal{H}_{f_{a, \tau}}(x)=(h_{f_{a, \tau}}(x_{1}), h_{f_{a, \tau}}(x_{2}),\cdots, h_{f_{a, \tau}}(x_{n}))^{\top},
\end{equation}
where the proximal mapping operator $h_{f_{a, \tau}}$ is defined in Lemma \ref{lem1}, and $x_{i}$ is the $i$-th element of the vector $x\in \mathbb{R}^{n}$.
\end{definition}

\begin{definition}\label{de2}
(Matrix element thresholding operator) For any matrix $B\in \mathbb{R}^{m\times n}$, we define the matrix element thresholding operator $\mathcal{D}_{a, \tau}$ on matrix $B$ as
\begin{equation}\label{equ16}
\mathcal{D}_{a, \tau}(B)=[h_{f_{a, \tau}}(B_{l,j})],
\end{equation}
where $l\in[m]$, $j\in[n]$, and the proximal operator $h_{f_{a, \tau}}$ is defined in Lemma \ref{lem1}.
\end{definition}

The matrix element thresholding operator $\mathcal{D}_{a, \tau}$ defined in Definition \ref{de2} simply applies the proximal operator $h_{f_{a, \tau}}$ to the
elements of a matrix. If many of the elements of matrix $B$ are below the threshold value $t_{a,\tau}$, the matrix element thresholding operator $\mathcal{D}_{a, \tau}$
effectively shrinks them towards zero, and the matrix $\mathcal{D}_{a, \tau}(B)$ is a sparse matrix. Similarly, we can also define the following matrix singular value
thresholding operator $\mathcal{G}_{a,\tau}$ whcih applies the proximal operator $h_{f_{a, \tau}}$ to the singular values of a matrix.

\begin{definition}\label{de3}
(Matrix singular value thresholding operator) Suppose the matrix $N\in \mathbb{R}^{m\times n}$ admits a singular value decomposition (SVD) as
$$N=U\left[
       \begin{array}{c}
         \mathrm{Diag}(\sigma(N)) \\
         \mathbf{0}_{(m-n)\times n} \\
       \end{array}
     \right]
     V^{\top},$$
where $U$ is a $m\times m$ unitary matrix, $V$ is a $n\times n$ unitary matrix, $\mathrm{Diag}(\sigma(N))$ is a $n\times n$ diagonal matrix and $\mathbf{0}_{(m-n)\times n}$ is a
$(m-n)\times n$ zero matrix. We define the matrix singular value thresholding operator $\mathcal{G}_{a,\tau}$ on matrix $N$ as
\begin{equation}\label{equ17}
\mathcal{G}_{a,\tau}(N)=U\left[
                           \begin{array}{c}
                             \mathrm{Diag}(\mathcal{H}_{f_{a,\tau}}(\sigma(N))) \\
                             \mathbf{0}_{(m-n)\times n} \\
                           \end{array}
                         \right]
V^{\top},
\end{equation}
where the vector thresholding operator $\mathcal{H}_{f_{a,\tau}}$ is defined in Definition \ref{de1}.
\end{definition}

By Definition \ref{de3}, we know that if there are some nonzero singular values of matrix $N$ are below the threshold value $t_{a,\tau}$, the rank of matrix $\mathcal{G}_{a,\tau}(N)$
must be lower than the rank of matrix $N$. Furthermore, according to Lemma \ref{lem1} and [\cite{Lu2015}, Proposition 2.1], we can get the following result.

\begin{lemma}\label{lem2}
Let
$$N=U\left[
       \begin{array}{c}
         \mathrm{Diag}(\sigma(N)) \\
         \mathbf{0}_{(m-n)\times n} \\
       \end{array}
     \right]
     V^{\top}$$
be the singular value decomposition of matrix $N\in \mathbb{R}^{m\times n}$ and
$$\mathcal{G}_{a,\tau}(N)=U\left[
                           \begin{array}{c}
                             \mathrm{Diag}(\mathcal{H}_{f_{a,\tau}}(\sigma(N))) \\
                             \mathbf{0}_{(m-n)\times n} \\
                           \end{array}
                         \right]
V^{\top}.$$
Then
\begin{equation}\label{equ19}
\mathcal{G}_{a,\tau}(N):=\arg\min_{Z\in \mathbb{R}^{m\times n}}\Big\{\frac{1}{2}\|Z-N\|_{F}^{2}+\tau P_{a}(\sigma(Z))\Big\}.
\end{equation}
\end{lemma}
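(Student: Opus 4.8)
The plan is to follow the standard route for generalized singular value thresholding: first reduce the matrix problem (\ref{equ19}) to a decoupled scalar minimization over the singular values by means of von Neumann's trace inequality, and then solve that scalar problem termwise using Lemma \ref{lem1}, with [\cite{Lu2015}, Proposition 2.1] supplying the monotonicity needed to keep the resulting singular-value vector in descending order. Throughout, set $P_{a}(\sigma(Z))=\sum_{i\in[n]}\rho_{a}(\sigma_{i}(Z))$ and $J(Z)=\frac{1}{2}\|Z-N\|_{F}^{2}+\tau P_{a}(\sigma(Z))$.

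First I would expand the Frobenius norm and use $\|Z\|_{F}^{2}=\sum_{i\in[n]}\sigma_{i}(Z)^{2}$, $\|N\|_{F}^{2}=\sum_{i\in[n]}\sigma_{i}(N)^{2}$ together with von Neumann's trace inequality $\langle Z,N\rangle\leq\sum_{i\in[n]}\sigma_{i}(Z)\sigma_{i}(N)$ to get
$$J(Z)\ \geq\ \sum_{i\in[n]}\Big(\frac{1}{2}\big(\sigma_{i}(Z)-\sigma_{i}(N)\big)^{2}+\tau\rho_{a}(\sigma_{i}(Z))\Big),$$
where the $i$-th summand is exactly of the form (\ref{equ11}) with the parameter $\gamma$ taken to be $\sigma_{i}(N)$, and where equality holds provided $Z$ admits a singular value decomposition sharing the left and right singular vector matrices $U,V$ of $N$. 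Hence $\min_{Z}J(Z)$ is bounded below by the optimal value of the constrained scalar problem $\min\{\sum_{i\in[n]}f_{a,\tau}(\delta_{i}):\ \delta_{1}\geq\delta_{2}\geq\cdots\geq\delta_{n}\geq0\}$, and any feasible $\delta$ that attains this value produces, via the matrix built from $U$, $V$ and diagonal part $\mathrm{Diag}(\delta)$ as in Definition \ref{de3}, a matrix attaining $\min_{Z}J(Z)$.

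Next I would solve the scalar problem. By Lemma \ref{lem1}, the unconstrained minimizer of the $i$-th summand over $\delta_{i}\in\mathbb{R}$ is $\delta_{i}=h_{f_{a,\tau}}(\sigma_{i}(N))$, which is nonnegative since $\sigma_{i}(N)\geq0$ and $\rho_{a}$ is even. The one thing left to verify is that the vector $\mathcal{H}_{f_{a,\tau}}(\sigma(N))=(h_{f_{a,\tau}}(\sigma_{1}(N)),\ldots,h_{f_{a,\tau}}(\sigma_{n}(N)))$ is already sorted in descending order, so that this termwise minimizer is feasible --- and therefore optimal --- for the constrained scalar problem. This is precisely where the shape of the fraction function matters: $\rho_{a}(t)=1-\frac{1}{at+1}$ is nondecreasing and concave on $[0,+\infty)$, which forces $h_{f_{a,\tau}}$ to be monotone nondecreasing, so $\sigma_{i}(N)\geq\sigma_{i+1}(N)$ gives $h_{f_{a,\tau}}(\sigma_{i}(N))\geq h_{f_{a,\tau}}(\sigma_{i+1}(N))$; this monotonicity is exactly what [\cite{Lu2015}, Proposition 2.1] provides. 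Putting the two parts together, the matrix $\mathcal{G}_{a,\tau}(N)$ of Definition \ref{de3} realizes equality in von Neumann's inequality and has singular values equal to the termwise minimizers $h_{f_{a,\tau}}(\sigma_{i}(N))$, hence attains $\min_{Z}J(Z)$, which establishes (\ref{equ19}).

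The step I expect to be the main obstacle is the careful handling of the equality condition in von Neumann's trace inequality for rectangular matrices and in the presence of repeated singular values --- one has to argue that a common pair of singular-vector matrices for $Z$ and $N$ can be chosen --- together with the monotonicity of the proximal operator $h_{f_{a,\tau}}$. Both are standard consequences of the concavity of $\rho_{a}$ and the cited Proposition 2.1, but they are the load-bearing steps and the only places where anything beyond Lemma \ref{lem1} is really used.
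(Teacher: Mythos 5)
Your argument is correct and is essentially the same route the paper takes: the paper simply invokes Lemma \ref{lem1} together with Proposition 2.1 of \cite{Lu2015}, and that proposition is exactly the reduction you carry out by hand --- von Neumann's trace inequality to decouple the problem over singular values, termwise application of the scalar proximal operator $h_{f_{a,\tau}}$, and monotonicity of that operator to keep the thresholded singular values in descending order. Your write-up just supplies the details the paper delegates to the citation (one small remark: the monotonicity of the prox selection holds for any penalty by the standard two-point inequality, so concavity of $\rho_{a}$ is not actually needed there).
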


\section{The algorithm for solving the augmented Lagrange problem (\ref{equ10})}\label{section3}

In this section, we use an alternative direction method of multipliers (ADMM) algorithm \cite{Gabay1976,Boyd2010} to solve our augmented Lagrange problem (\ref{equ10}). The ADMM algorithm can break
the problem (\ref{equ10}) into two smaller sub-problems, each of which is easy to handle. Now, we first review the basic process of the ADMM algorithm.

\subsection{The ADMM algorithm}\label{section3-1}

In general, the ADMM algorithm \cite{Gabay1976,Boyd2010} solves the problem in the form
\begin{equation}\label{equ19}
\min\ \tilde{f}(x)+\tilde{g}(z),\ \ s.t. \ \ Ax+Bz=c,
\end{equation}
where $x\in \mathbb{R}^{n}$, $z\in \mathbb{R}^{m}$, $A\in \mathbb{R}^{p\times n}$, $B\in \mathbb{R}^{p\times m}$, $c\in \mathbb{R}^{p}$, $\tilde{f}: \mathbb{R}^{n}\rightarrow \mathbb{R}$,
$\tilde{g}: \mathbb{R}^{m}\rightarrow \mathbb{R}$. The augmented Lagrange function for the problem (\ref{equ19}) is defined as
\begin{equation}\label{equ20}
L_{\mu}(x,z,y)=\tilde{f}(x)+\tilde{g}(z)+y^{\top}(Ax+Bz-c)+\displaystyle \frac{\mu}{2}\|Ax+Bz-c\|_{2}^{2},
\end{equation}
where $y\in \mathbb{R}^{p}$ is the Lagrange multiplier and $\mu>0$ is the penalty parameter.

Given $z^{0}\in \mathbb{R}^{m}$ and $y^{0}\in \mathbb{R}^{p}$, for $k=0, 1,2,\cdots$, the ADMM algorithm consists the iterations
\begin{equation}\label{equ21}
\left\{
  \begin{array}{ll}
    \hbox{$x^{k+1}=\displaystyle\arg\min_{x\in \mathbb{R}^{n}}L_{\mu}(x, z^{k},y^{k})$;} \\
    \hbox{$z^{k+1}=\displaystyle\arg\min_{z\in \mathbb{R}^{m}}L_{\mu}(x^{k+1}, z,y^{k})$;} \\
    \hbox{$y^{k+1}=\displaystyle y^{k}+\mu(Ax^{k+1}+Bz^{k+1}-c)$.}
  \end{array}
\right.
\end{equation}
We can see that the ADMM algorithm consists of a $x$-minimization step, a $z$-minimization step, and a dual variable update. The dual variable $y$ update uses
a step size $\mu$.

ADMM algorithm is a simple and effective method for separable programming problems. Its greatest advantage is that it makes full use of the separability of the objective function,
decomposes the original problem into several alternating minimizer problems which are easier to obtain the optimal solution for analysis, and it is more suitable for large-scale
problems with a large number of variables in practical application.

\subsection{The ADMM algorithm for solving the augmented Lagrange problem (\ref{equ10})}\label{section3-1}

In this subsection, we process the ADMM algorithm to solve our augmented Lagrange problem (\ref{equ10}). In order to convenient, we write the augmented Lagrange function for the
problem (\ref{equ10}) as
\begin{equation}\label{equ22}
\begin{array}{llll}
L_{a_{1}, a_{2}, \mu}(L,S,Y)=F_{a_{1}}(\sigma(L))+\lambda F_{a_{2}}(S)+\langle Y, M-L-S\rangle+\displaystyle\frac{\mu}{2}\|M-L-S\|_{F}^{2}.
\end{array}
\end{equation}
Therefore, given $S^{0}\in \mathbb{R}^{m\times n}$ and $Y^{0}\in \mathbb{R}^{m\times n}$, for $k=0, 1,2,\cdots$, the ADMM algorithm for solving the augmented Lagrange
problem (\ref{equ10}) can be described as
\begin{equation}\label{equ23}
\left\{
  \begin{array}{ll}
    \hbox{$L^{k+1}=\displaystyle\arg\min_{L\in \mathbb{R}^{m\times n}}L_{a_{1},a_{2},\mu}(L, S^{k},Y^{k})$;} \\
    \hbox{$S^{k+1}=\displaystyle\arg\min_{S\in \mathbb{R}^{m\times n}}L_{a_{1},a_{2},\mu}(L^{k+1}, S,Y^{k})$;} \\
    \hbox{$Y^{k+1}=\displaystyle Y^{k}+\mu(M-L^{k+1}-S^{k+1})$.}
  \end{array}
\right.
\end{equation}
Combing the truth that
\begin{equation}\label{equ24}
\begin{array}{llll}
L^{k+1}&=&\displaystyle\arg\min_{L\in \mathbb{R}^{m\times n}}\ L_{a_{1},a_{2},\mu}(L,S^{k},Y^{k})\\
&=&\displaystyle\arg\min_{L\in \mathbb{R}^{m\times n}}\ F_{a_{1}}(\sigma(L))+\lambda F_{a_{2}}(S^{k})+\displaystyle\frac{\mu}{2}\|M-L-S^{k}+\mu^{-1}Y^{k}\|_{F}^{2}\\
&=&\displaystyle\arg\min_{L\in \mathbb{R}^{m\times n}}\ \mu^{-1}F_{a_{1}}(\sigma(L))+\lambda\mu^{-1} F_{a_{2}}(S^{k})+\displaystyle\frac{1}{2}\|M-L-S^{k}+\mu^{-1}Y^{k}\|_{F}^{2}\\
&=&\mathcal{G}_{a_{1},\mu^{-1}}(M-S^{k}+\mu^{-1}Y^{k})
\end{array}
\end{equation}
and
\begin{equation}\label{equ25}
\begin{array}{llll}
S^{k+1}&=&\displaystyle\arg\min_{S\in \mathbb{R}^{m\times n}}\ L_{a_{1},a_{2},\mu}(L^{k+1},S,Y^{k})\\
&=&\displaystyle\arg\min_{L\in \mathbb{R}^{m\times n}}\ F_{a_{1}}(\sigma(L^{k+1}))+\lambda F_{a_{2}}(S)+\displaystyle\frac{\mu}{2}\|M-L^{k+1}-S+\mu^{-1}Y^{k}\|_{F}^{2}\\
&=&\displaystyle\arg\min_{L\in \mathbb{R}^{m\times n}}\ \mu^{-1}F_{a_{1}}(\sigma(L^{k+1}))+\lambda\mu^{-1} F_{a_{2}}(S)+\displaystyle\frac{1}{2}\|M-L^{k+1}-S+\mu^{-1}Y^{k}\|_{F}^{2}\\
&=&\mathcal{D}_{a_{2},\lambda\mu^{-1}}(M-L^{k+1}+\mu^{-1}Y^{k}),
\end{array}
\end{equation}
then the analytical expression of (\ref{equ23}) can be expressed as
\begin{equation}\label{equ26}
\left\{
  \begin{array}{ll}
    \hbox{$L^{k+1}=\mathcal{G}_{a_{1},\mu^{-1}}(M-S^{k}+\mu^{-1}Y^{k})$;} \\
    \hbox{$S^{k+1}=\mathcal{D}_{a_{2},\lambda\mu^{-1}}(M-L^{k+1}+\mu^{-1}Y^{k})$;} \\
    \hbox{$Y^{k+1}=\displaystyle Y^{k}+\mu(M-L^{k+1}-S^{k+1})$,}
  \end{array}
\right.
\end{equation}
where $\mathcal{G}_{a_{1},\mu^{-1}}$ is obtained by replacing $a$ and $\tau$ with $a_{1}$ and $\mu^{-1}$ in $\mathcal{G}_{a,\tau}$, and $\mathcal{D}_{a_{2},\lambda\mu^{-1}}$ is obtained
by replacing $a$ and $\tau$ with $a_{2}$ and $\lambda\mu^{-1}$ in $\mathcal{D}_{a,\tau}$. The ADMM algorithm for solving the augmented Lagrange problem (\ref{equ10}) can be summarized in the following Algorithm \ref{alg1}.

\begin{algorithm}[h!]
\caption{: ADMM algorithm for solving the augmented Lagrange problem (\ref{equ10})}
\label{alg1}
\begin{algorithmic}
\STATE {\textbf{Initialize}: $S^{0}, Y^{0}\in \mathbb{R}^{m\times n}$, $a_{1}>0$, $a_{2}>0$, $\mu>0$, $\lambda>0$;}
\STATE {$k=0$;}
\STATE {\textbf{while} not converged \textbf{do}}
\STATE {1.\ $L^{k+1}=\mathcal{G}_{a_{1},\mu^{-1}}(M-S^{k}+\mu^{-1}Y^{k})$;}
\STATE {2.\ $S^{k+1}=\mathcal{D}_{a_{2},\lambda\mu^{-1}}(M-L^{k+1}+\mu^{-1}Y^{k})$;}
\STATE {3.\ $Y^{k+1}=Y^{k}+\mu(M-L^{k+1}-S^{k+1})$;}
\STATE {4.\ $k\rightarrow k+1$;}
\STATE{\textbf{end while}}
\STATE{\textbf{return}: $L^{\ast}$, $S^{\ast}$}
\end{algorithmic}
\end{algorithm}

It should be emphasized that the choice of penalty parameters $\mu$ and $\lambda$ have a great influence on the performance of Algorithm \ref{alg1}, and how to choose the best parameters $\mu$ and
$\lambda$ in Algorithm \ref{alg1} is a very hard problem. In this paper, we choose the parameter $\mu$ as
\begin{equation}\label{equ27}
\mu_{k+1}=\min\{\rho\mu_{k}, \bar{\mu}\},\ \ \ k=0, 1, 2, \cdots
\end{equation}
in each iteration in Algorithm \ref{alg1}, where $\rho>1$ is a constant factor and $\bar{\mu}$ is a given positive number.

In addition, the cross-validation method is accepted for the choice of the parameter $\lambda$ in Algorithm \ref{alg1}. We suppose that the matrix $S^{\ast}$ of sparsity $\gamma$ is the
optimal solution to the augmented Lagrange problem (\ref{equ10}). In each iteration, we rearrange the absolute value of elements of the matrix $M-L^{k+1}+\mu_{k}^{-1}Y^{k}\in \mathbb{R}^{m\times n}$
as a nonincreasing rearrangement vector by the Matlab code:
\begin{equation}\label{equ28}
h^{k+1}=sort(abs(T^{k+1}(:)),`descend'),
\end{equation}
where $T^{k+1}=M-L^{k+1}+\mu_{k}^{-1}Y^{k}$. By the operation (\ref{equ28}), we have $h^{k+1}_{1}\geq h^{k+1}_{2}\geq\cdots\geq h^{k+1}_{mn}$. Therefore, the following inequalities hold:
$$h^{k+1}_{i}>t_{a_{2},\lambda\mu_{k}^{-1}}\Leftrightarrow i\in \{1,2,\cdots,\gamma\},$$
$$h^{k+1}_{j}\leq t_{a_{2},\lambda\mu_{k}^{-1}}\Leftrightarrow j\in \{\gamma+1, \gamma+2,\cdots, n\},$$
where $t_{a_{2},\lambda\mu_{k}^{-1}}$ is the threshold value which is defined in Lemma \ref{lem1} which obtained by replacing $a$ and $\lambda$ with $a_{2}$ and $\lambda\mu_{k}^{-1}$ in $t_{a,\lambda}$.
According to $\sqrt{2\lambda\mu_{k}^{-1}}-\frac{1}{2a_{2}}\leq \lambda\mu_{k}^{-1}a_{2}$, we have
\begin{equation}\label{equ29}
\left\{
  \begin{array}{ll}
   h^{k+1}_{\gamma}>\sqrt{2\lambda\mu_{k}^{-1}}-\frac{1}{2a_{2}}; \\
   h^{k+1}_{\gamma+1}\leq\lambda\mu_{k}^{-1}a_{2},
  \end{array}
\right.
\end{equation}
which implies
\begin{equation}\label{equ30}
\frac{\mu_{k} h^{k+1}_{\gamma+1}}{a_{2}}\leq\lambda<\frac{\mu_{k}(2a_{2}h^{k+1}_{\gamma}+1)^{2}}{8a_{2}^{2}}.
\end{equation}
Therefore, in each iteration, a choice of parameter $\lambda$ in Algorithm \ref{alg1} can be selected as
\begin{equation}\label{equ31}
\lambda=\left\{
            \begin{array}{ll}
              \frac{\mu_{k} h^{k+1}_{\gamma+1}}{a_{2}}, & \ \ {\mathrm{if}\ \frac{\mu_{k} h^{k+1}_{\gamma+1}}{a_{2}}\leq\frac{\mu_{k}}{2a_{2}^{2}};} \\
              \frac{(1-\epsilon)\mu_{k}(2a_{2}h^{k+1}_{\gamma}+1)^{2}}{8a_{2}^{2}},  &\ \ {\mathrm{if}\ \frac{\mu_{k} h^{k+1}_{\gamma+1}}{a_{2}}>\frac{\mu_{k}}{2a_{2}^{2}},}
            \end{array}
          \right.
\end{equation}
where $\epsilon>0$ is a very small positive number such as 0.01 or 0.001. There is one more thing needed to be mentioned that the threshold value
$t_{a_{2},\lambda\mu_{k}^{-1}}=\lambda\mu_{k}^{-1}a_{2}$ if $\lambda=\frac{\mu_{k} h^{k+1}_{\gamma+1}}{a_{2}}$, and $t_{a_{2},\lambda\mu_{k}^{-1}}=\sqrt{2\lambda\mu_{k}^{-1}}-\frac{1}{2a_{2}}$
if $\lambda=\frac{(1-\epsilon)\mu_{k}(2a_{2}h^{k+1}_{\gamma}+1)^{2}}{8a_{2}^{2}}$.

When doing so, the Algorithm \ref{alg1} will be adaptive and free from the choice of parameter $\lambda$. By above operations, the algorithm \ref{alg1} varies the
parameters $\mu$ and $\lambda$ by iteration for solving the augmented Lagrange problem (\ref{equ10}) can be summarized in Algorithm \ref{alg2}.

\begin{algorithm}[h!]
\caption{: ADMM algorithm for solving the augmented Lagrange problem (\ref{equ10})}
\label{alg2}
\begin{algorithmic}
\STATE {\textbf{Initialize}: $S^{0}, Y^{0}\in \mathbb{R}^{m\times n}$, $a_{1}>0$, $a_{2}>0$, $\mu_{0}>0$, $\bar{\mu}>0$, $\lambda>0$, $\rho>1$, $\epsilon>0$;}
\STATE {$k=0$;}
\STATE {\textbf{while} not converged \textbf{do}}
\STATE {$Z^{k}=M-S^{k}+\mu_{k}^{-1}Y^{k}$;}
\STATE {Compute the SVD of $Z^{k}$ as}
\STATE {$Z^{k}=U^{k}\left[
       \begin{array}{c}
         \mathrm{Diag}(\sigma(Z^{k})) \\
         \mathbf{0}_{(m-n)\times n} \\
       \end{array}
     \right](V^{k})^{\top}$;}
\STATE {if\ $\mu_{k}^{-1}\leq\frac{1}{2a_{1}^{2}}$\ then}
\STATE \ \ \ \ {$t_{a_{1}, \mu_{k}^{-1}}=\mu_{k}^{-1}a_{1}$;}
\STATE {else}
\STATE \ \ \ \  {$t_{a_{1}, \mu_{k}^{-1}}=\sqrt{2\mu_{k}^{-1}}-\frac{1}{2a_{1}}$;}
\STATE {for\ $i=1:n$}
\STATE \ \ \ \ {1.\ $\sigma_{i}(Z^{k})>t_{a_{1},\mu_{k}^{-1}}$, then $\sigma_{i}(Z^{k+1})=g_{a_{1},\mu_{k}^{-1}}(\sigma_{i}(Z^{k}))$;}
\STATE \ \ \ \ {2.\ $\sigma_{i}(Z^{k})\leq t_{a_{1},\mu_{k}^{-1}}$, then $\sigma_{i}(Z^{k+1})=0$;}
\STATE {$L^{k+1}=U^{k}\left[
       \begin{array}{c}
         \mathrm{Diag}(\sigma(Z^{k+1})) \\
         \mathbf{0}_{(m-n)\times n} \\
       \end{array}
     \right](V^{k})^{\top}$;}
\STATE {$T^{k+1}=M-L^{k+1}+\mu_{k}^{-1}Y^{k}$;}
\STATE {Rearrange the absolute value of elements of the matrix $T^{k+1}$ as a nonincreasing rearrangement vector:}
\STATE {$h^{k+1}=sort(abs(T^{k+1}(:)),`descend')$;}
\STATE {if\ $\frac{\mu_{k} h^{k+1}_{\gamma+1}}{a_{2}}\leq\frac{\mu_{k}}{2a_{2}^{2}}$\ then}
\STATE \ \ \ \ {$\lambda=\frac{\mu_{k} h^{k+1}_{\gamma+1}}{a_{2}}$; $t_{a_{2}, \lambda\mu_{k}^{-1}}=\lambda\mu_{k}^{-1}a_{2}$;}
\STATE {else}
\STATE \ \ \ \ {$\lambda=\frac{(1-\epsilon)\mu_{k}(2a_{2}h^{k+1}_{\gamma}+1)^{2}}{8a_{2}^{2}}$; $t_{a_{2}, \lambda\mu_{k}^{-1}}=\sqrt{2\lambda\mu_{k}^{-1}}-\frac{1}{2a_{2}}$;}
\STATE {for\ $l=1:m$}
\STATE \ \ \ \ {for\ $j=1:n$}
\STATE \ \ \ \ {1.\ $|T^{k+1}_{l,j}|>t_{a_{2},\lambda\mu_{k}^{-1}}$, then $S^{k+1}_{l,j}=g_{a_{2},\lambda\mu_{k}^{-1}}(T^{k+1}_{l,j})$;}
\STATE \ \ \ \ {2.\ $|T^{k+1}_{l,j}|\leq t_{a_{2},\lambda\mu_{k}^{-1}}$, then $S^{k+1}_{l,j}=0$;}
\STATE {$S^{k+1}=[S^{k+1}_{l,j}]$;}
\STATE {$Y^{k+1}=Y^{k}+\mu_{k}(M-L^{k+1}-S^{k+1})$;}
\STATE {$\mu_{k+1}=\min\{\rho\mu_{k}, \bar{\mu}\}$;}
\STATE {$k\rightarrow k+1$;}
\STATE{\textbf{end while}}
\STATE{\textbf{return}: $L^{\ast}$, $S^{\ast}$}
\end{algorithmic}
\end{algorithm}

\section{Numerical experiments} \label{section4}

In this section, we present some numerical experiments for the problem of low-rank and sparse matrix decomposition to demonstrate the performances of the Algorithm \ref{alg2}.

In Algorithm \ref{alg2}, the most important implementation detail is the initial choice of the parameter $\mu$. In these numerical experiments, we set the initial value of the
parameter $\mu$ as
\begin{equation}\label{equ32}
\mu_{0}=\min\bigg\{\frac{2}{(0.99\|M\|_{2}+\frac{1}{2a_{1}})^{2}}, \frac{a_{1}}{0.99\|M\|_{2}}\bigg\}.
\end{equation}
Moreover, we also set $\rho=1.5$, $\epsilon=0.01$ and $\bar{\mu}=\mu_{0}\times 10^{7}$ in Algorithm \ref{alg2}. We take $m=n$, and generate $m\times m$ available real data $M$ using $M=L+S$,
where $L\in \mathbb{R}^{m\times m}$ and $S\in \mathbb{R}^{m\times m}$ are the true low-rank and sparse matrices that we wish to recover, respectively. Without loss of generality,
the low-rank matrix $L$ is generated by the following Matlab code:
$$L=1/m*rand(m,r)*rand(r,m),$$
where $r<m$. Therefore, the matrix $L\in \mathbb{R}^{m\times m}$ has rank at most $r$. The sparse matrix $S\in \mathbb{R}^{m\times m}$ is constructed by setting a proportion of
entries to be $\pm 1$ and the rest to be zeros. The number of the nonzero elements of the sparse matrix $S$ is set to $\|S\|_{0}=spr\times m\times m$, where $spr\in[0,1]$ is the sparsity ration.
In the experiments, the relative errors are respectively denoted by
\begin{equation}\label{equ33}
\left\{
  \begin{array}{ll}
    \hbox{$\mathrm{rel.err}(M)=\|M-L^{k+1}-S^{k+1}\|_{F}/\max\{1,\|M\|_{F}\}$;} \\
    \hbox{$\mathrm{rel.err}(L)=\|L-L^{k+1}\|_{F}/\max\{1,\|L\|_{F}\}$;} \\
    \hbox{$\mathrm{rel.err}(S)=\|S-S^{k+1}\|_{F}/\max\{1,\|S\|_{F}\}$.}
  \end{array}
\right.
\end{equation}
The stopping criterion is defined as $\mathrm{rel.err}(M)\leq 10^{-6}$ or the maximum iteration equation equals to $1000$. The initial matrices $S^{0}, Y^{0}\in \mathbb{R}^{m\times m}$ in
Algorithm \ref{alg2} are chosen as the zero matrices. These numerical experiments are all conducted on a personal computer (3.40GHz, 16.0GB RAM) with MATLAB R2015b.

\begin{table}[htbp]
	\centering
	\caption{Performance of Algorithm \ref{alg2} with different $a_{1}$ and $a_{2}$, $spr=0.15$.}\label{table1}
	\begin{tabular}{cccccccccc}
		\toprule  
		$a_{1}$&$a_{2}$&$m$&$r$&$\mathrm{rel.err}(M)$&$\mathrm{rel.err}(L)$ &$\mathrm{rank}(L)$&$\mathrm{rel.err}(S)$&$\|S\|_{0}$&Iteration $k$  \\
		\midrule  
		$1$&1&400&35&7.75e-07&3.47e-05&35&1.41e-06&24000&26\\
		 & &  &40&4.66e-07&3.06e-05&40&1.49e-06&24000&27\\
		 & &  &50&7.65e-07&2.36e-05&50&1.44e-06&24000&28\\
		\midrule  
		$5$&5&400&35&5.08e-07&2.88e-05&35&1.10e-06&24000&29\\
		 & &  &40&8.48e-07&4.01e-05&40&1.83e-06&24000&29\\
		 & &  &50&5.75e-07&2.11e-05&50&1.29e-06&24000&31\\
		\midrule  
		$10$&10&400&35&8.11e-07&3.99e-05&35&1.54e-06&24000&31\\
		 & &  &40&5.31e-07&2.18e-05&40&1.17e-06&24000&32\\
		 & &  &50&6.08e-07&2.09e-05&50&1.25e-06&24000&33\\
		\midrule  
		$50$&50&400&35&8.58e-07&3.26e-05&35&1.26e-06&24000&35\\
		 & &  &40&7.28e-07&2.56e-05&40&1.19e-06&24000&36\\
		 & &  &50&9.67e-07&1.12e-01&338&9.20e-03&24000&79\\
		\midrule  
		$80$&80&400&35&9.88e-07&1.26e-01&319&7.20e-03&24000&127\\
		 & &  &40&9.85e-07&1.00e-01&316&6.50e-03&24000&125\\
		 & &  &50&9.96e-07&6.69e-02&313&5.40e-03&24000&126\\
		\bottomrule  
	\end{tabular}
\end{table}

In order to implement Algorithm \ref{alg2}, we need to determine the parameters $a_{1}$ and $a_{2}$, which influences the behaviour of Algorithm \ref{alg2}. In the numerical
tests, we only take $a_{1}=a_{2}$ and test Algorithm \ref{alg2} on a series of low-rank and sparse matrix decomposition problems with different parameters $a_{1}$ and $a_{2}$,
and set $a_{1}=a_{2}=1,5,10,50,80$, respectively. In the numerical experiments, we only take $m=400$ and $spr=0.15$, and the results are shown in Table \ref{table1}. Comparing the performances
of Algorithm \ref{alg2} for low-rank and sparse matrix decomposition problems with different parameters $a_{1}$ and $a_{2}$, we can find that the parameter $a_{1}=a_{2}=1$ seems to be the
optimal strategy for Algorithm \ref{alg2} in our numerical experiments.

Next, we demonstrate the performance of the Algorithm \ref{alg2} on some low-rank and sparse matrix decomposition problems with different $m$ and $spr$ when we set $a_{1}=a_{2}=1$. We set $m$ to 500, 600, 700, 800,
and $spr$ to 0.20 and 0.25.  Numerical results of the Algorithm \ref{alg2} for the low-rank and sparse matrix decomposition problems are reported in Tables \ref{table2} and \ref{table3}. We can see that
the Algorithm \ref{alg2} with $a_{1}=a_{2}=1$ performs very well in separating the low-rank matrix and sparse matrix.

\begin{table}[htbp]
	\centering
	\caption{Performance of Algorithm \ref{alg2} with $spr=0.20$, $a_{1}=a_{2}=1$.}\label{table2}
    \begin{tabular}{cccccccc}
		\toprule  
		$m$&$r$&$\mathrm{rel.err}(M)$&$\mathrm{rel.err}(L)$ &$\mathrm{rank}(L)$&$\mathrm{rel.err}(S)$&$\|S\|_{0}$&Iteration $k$  \\
		\midrule  
		500&50&5.31e-07&4.21e-05&50&1.95e-06&50000&29\\
		   &60&7.31e-07&4.92e-05&60&2.83e-06&50000&30\\
		   &70&9.92e-07&2.22e-05&70&1.50e-06&50000&33\\
		\midrule  
		600&60&7.40e-07&4.90e-05&60&2.63e-06&72000&30\\
		   &70&5.77e-07&4.09e-05&70&2.30e-06&72000&31\\
		   &80&6.69e-07&2.37e-05&80&1.4509e-06&72000&32\\
		\midrule  
		700&70&4.75e-07&3.82e-05&70&1.80e-06&98000&30\\
		   &80&7.59e-07&3.12e-05&80&1.59e-06&98000&31\\
		   &90&8.75e-07&2.58e-05&90&1.49e-06&98000&32 \\
		\midrule  
		800&80&8.03e-07&6.50e-05&80&2.92e-06&128000&30\\
		   &90&7.13e-07&4.93e-05&90&2.52e-06&128000&31\\
		   &100&7.00e-07&4.33e-05&100&2.55e-06&128000&32\\
		\bottomrule  
	\end{tabular}
\end{table}

\begin{table}[htbp]
	\centering
	\caption{Performance of Algorithm \ref{alg2} with $spr=0.25$, $a_{1}=a_{2}=1$.}\label{table3}
    \begin{tabular}{cccccccc}
		\toprule  
		$m$&$r$&$\mathrm{rel.err}(M)$&$\mathrm{rel.err}(L)$ &$\mathrm{rank}(L)$&$\mathrm{rel.err}(S)$&$\|S\|_{0}$&Iteration $k$  \\
		\midrule  
		500&50&6.69e-07&3.60e-05&50&1.49e-06&62500&31\\
		   &60&8.81e-07&3.88e-05&60&1.93e-06&62500&32\\
		   &70&7.68e-07&4.88e-05&70&2.92e-06&62500&34\\
		\midrule  
		600&60&6.06e-07&3.52e-05&60&1.42e-06&90000&31\\
		   &70&9.88e-07&5.76e-05&70&3.21e-06&90000&32\\
		   &80&7.14e-07&2.99e-05&80&2.07e-06&90000&36\\
		\midrule  
		700&70&7.42e-07&3.62e-05&70&1.43e-06&122500&32\\
		   &80&8.47e-07&6.90e-05&80&3.41e-06&122500&33\\
		   &90&8.03e-07&5.41e-05&90&3.20e-06&122500&34\\
		\midrule  
		800&80&6.70e-07&5.50e-05&80&2.20e-06&160000&32\\
		   &90&9.60e-07&5.62e-05&90&2.64e-06&160000&33\\
		   &100&7.84e-07&3.48e-05&100&1.82e-06&160000&34\\
		\bottomrule  
	\end{tabular}
\end{table}

\section{Conclusion}\label{section5}

In this paper, based on the nonconvex fraction function, we presented a nonconvex optimization model for the low-rank and sparse matrix decomposition problem. The ADMM algorithm is utilized
to solve our nonconvex optimization problem, and the numerical results on some low-rank and sparse matrix decomposition problems show that our method performs very well in recovering
low-rank matrices which are heavily corrupted by large sparse errors. Moreover, there are some interesting problems should be solved in our future work. First, the convergence of our
algorithm is not proved in this paper, and we would like to treat it as our future work. Second, we found that our algorithm is very sensitive to the choice of the parameters and how
to choose the best parameters for our algorithm is also a very hard problem for us at present, and we also would like to treat it as our future work.

\ifCLASSOPTIONcaptionsoff
  \newpage
\fi

\end{document}